\documentclass[12pt,reqno]{amsart}

\usepackage{amsmath,amssymb,amscd,graphicx, amsthm,dsfont,
  enumerate, hyperref} 
    \setlength{\textwidth}{\paperwidth}
\addtolength{\textwidth}{-2in}
\calclayout
\usepackage[pagewise]{lineno}

\usepackage{mathrsfs}
\usepackage{mathtools}
\usepackage{bigints}
\usepackage[all]{xy}
\usepackage{amsmath}
\usepackage{color,xcolor}
\definecolor{darkred}{rgb}{1,0,0} 
\definecolor{darkgreen}{rgb}{0,0.8,0}
\definecolor{darkblue}{rgb}{0,0,1}
\hypersetup{colorlinks,
linkcolor=darkblue,
filecolor=darkgreen,
urlcolor=darkred,
citecolor=darkgreen}
\numberwithin{equation}{section}
\theoremstyle{plain}
\newtheorem{theorem}{Theorem}
\numberwithin{theorem}{section}
\newtheorem{proposition}[theorem]{Proposition}
\newtheorem{lemma}[theorem]{Lemma}

\newtheorem{corollary}[theorem]{Corollary}
\theoremstyle{definition}

\title{A Sobolev Space Property of Logarithm of Lipschitz functions}

\author{Yifei Pan}
\address{Department of Mathematical Sciences, Purdue University Fort Wayne, 2101 East Coliseum Boulevard,
Fort Wayne, IN 46805, USA}
\email{pan@pfw.edu}

\begin{document}
 
 \begin{abstract}
For a Lipschitz function $f$ on an open set in $\mathbb{R}^n$, we consider the $L^{n}$ integrability of the quotient $\frac{|\nabla f|}{|f|}$ over its natural domain of definition. 
\end{abstract}
\subjclass[2010]{Primary 26B15.  
Secondary 26B99.}
\keywords{Zero set, critical points, divergent integrals.}

\maketitle

\section{Introduction and Results}

In this note, we prove the following seemingly simple result concerning the logarithm of Lipschitz functions.

\begin{theorem}\label{t1}
Let $\Omega$ be an open set in $\mathbb{R}^n$, and let $f$ be a Lipschitz function on $\overline\Omega$. Then the function $\log |f(x)-f(a)|$ does not belong to  the Sobolev space $W^{1,n}(\Omega)$ for every $a\in\overline\Omega$. Furthermore, if the function $\log |f(x)|$ belongs to the Sobolev space $W^{1,n}(\Omega)$, then $f$ never vanishes in the closure of $\Omega$. 

\end{theorem}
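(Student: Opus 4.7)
My plan is to prove the first assertion by contradiction using spherical means; the second assertion then follows at once, since if $f(a)=0$ for some $a\in\overline\Omega$ the first assertion applied to this $a$ contradicts $\log|f|\in W^{1,n}(\Omega)$. Suppose $u := \log|f(x) - f(a)| \in W^{1,n}(\Omega)$. I would first treat the interior case $a \in \Omega$ and reduce to $f(a)=0$ by replacing $f$ by $f-f(a)$, which is still Lipschitz with the same constant $L$.

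The heart of the argument is a clash between two bounds on the spherical average
$$M(s) := \frac{1}{\omega_{n-1}} \int_{S^{n-1}} u(a + s\theta)\, d\mathcal{H}^{n-1}(\theta), \qquad 0 < s < r,$$
where $r>0$ is small enough that $\overline{B(a,r)} \subset \Omega$. The Lipschitz bound $|f(x)-f(a)| \le L|x-a|$ yields $u(x) \le \log L + \log|x-a|$, which after spherical averaging gives $|M(s)| \ge \log(1/s) - \log L$ for all sufficiently small $s$. On the other hand, since $u \in W^{1,n}$, by the ACL characterization together with Fubini the radial slice $s\mapsto u(a+s\theta)$ is absolutely continuous on $(0,r)$ for $\mathcal{H}^{n-1}$-almost every $\theta$, with radial derivative $\nabla u(a+s\theta)\cdot\theta$. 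Integrating in $s$, averaging in $\theta$, and applying H\"older first in $\theta$ (with exponents $n$ and $n/(n-1)$) and then in $s$ against the polar weight $s^{n-1}$, I expect to obtain
$$|M(s_0)-M(s)| \le C\left(\int_{B(a,s_0)\setminus B(a,s)} |\nabla u|^n\, dx\right)^{1/n}(\log(s_0/s))^{(n-1)/n}.$$
The finiteness of $\int_\Omega|\nabla u|^n$ then forces $|M(s)| = O\!\left((\log(1/s))^{(n-1)/n}\right)$. Since $(n-1)/n < 1$ for $n \ge 2$, this contradicts the lower bound. The case $n=1$ is immediate, as a $W^{1,1}$ function on an interval admits a continuous representative and so cannot tend to $-\infty$.

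The main obstacle I foresee is the technical bookkeeping needed to push the $W^{1,n}$ hypothesis through polar coordinates: selection of an ACL representative on almost every ray from $a$, interchange of the radial integral with the sphere average via Fubini, and the two H\"older estimates leading to the decisive factor $(\log(s_0/s))^{(n-1)/n}$. A secondary technical point is the boundary case $a\in\partial\Omega$: one would replace $M(s)$ by its average over $\partial B(a,s)\cap\Omega$, which requires the solid-angle measure of $\Omega$ at $a$ to be bounded below (for instance a mild cone condition) so that the $\log(1/s)$ lower bound on $|M(s)|$ still survives.
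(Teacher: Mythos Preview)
Your approach is correct for $a\in\Omega$ and genuinely different from the paper's. The paper does not argue via spherical averages; it reduces Theorem~1.1 to Theorem~1.2 (divergence of $\int_{\Omega\setminus f^{-1}(0)}|\nabla f/f|^n$), and proves the latter ray by ray: after replacing $f$ by $f^2$ so that $\nabla f=0$ on the zero set (Lemma~2.2), one writes the integral in polar coordinates about a point of $\partial f^{-1}(0)$ and applies a one-variable Gronwall-type uniqueness lemma (Lemma~2.1: if $|\varphi'(t)|\le\lambda(t)\,t^{(1-n)/n}|\varphi(t)|$ with $\lambda\in L^n$ and $\varphi(0)=0$, then $\varphi\equiv0$) along almost every ray to force $f\equiv0$, a contradiction. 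Your route is the classical $n$-capacity estimate: the oscillation bound $|M(s_0)-M(s)|\le C\|\nabla u\|_{L^n}\bigl(\log(s_0/s)\bigr)^{(n-1)/n}$ is incompatible with the Lipschitz lower bound $|M(s)|\ge\log(1/s)-\log L$. Your argument is shorter and self-contained; the paper's detour through Theorem~1.2 buys the stronger intermediate statement (divergence of the gradient integral without assuming $\log|f|\in L^n$) and the unique-continuation corollaries that follow from it.

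On the boundary case $a\in\partial\Omega$: the paper's device is to extend $f$ to a Lipschitz function on all of $\mathbb{R}^n$ and invoke Theorem~1.2 for the extension, so that $a$ becomes an interior zero. Your cone-condition caveat is well taken: without some lower density of $\Omega$ at $a$, neither your partial-sphere averages nor the extension argument immediately forces the divergence to occur \emph{inside} $\Omega$; indeed on a cusp domain one can have $\log|f-f(a)|\in W^{1,n}(\Omega)$ with $a$ at the cusp tip. So your flagged hypothesis is not merely technical.
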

A natural way to view this result is to consider a (non-linear ) map $T_a(f)=\log |f(x)-f(a)|$ defined on all Lipschitz functions $Lip(\Omega)$ for each $a\in \overline{\Omega}$. Then
 $$T_a(Lip(\Omega))\cap  W^{1,n}(\Omega)=\emptyset .$$
The proof of this result is based on a general blow-up phenomenon, as we shall prove below, for Lipschitz functions. This phenomenon seems to reveal a competition between the sets of critical points and zeros at least for continuously differentiable functions.
\begin{theorem}\label{t1}
Let $\Omega$ be an open set in $\mathbb{R}^n$, and let $f$ be a non-constant locally Lipschitz  function on $\Omega$. If the zero set $\{x\in\Omega: f(x)=0\}$ of $f$ is not empty, then 
\begin{equation}\label{po}
\int_{\Omega\setminus f^{-1}(0)}\bigg|\frac{\nabla f(x)}{f(x)}\bigg|^{n}\, dx=\infty,
\end{equation}
or equivalently 
\begin{equation}\label{po}
\int_{\Omega\setminus f^{-1}(0)}|\nabla \log|f(x)||^n\, dx=\infty.
\end{equation}
\end{theorem}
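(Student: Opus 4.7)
The plan is to reduce the divergence to a one-dimensional weighted integral along radii emanating from a suitable boundary point of the zero set, and then to prove divergence of that weighted 1-D integral by a Hölder and total-variation argument.

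First I would assume $\Omega$ is connected (on components on which $f$ is constant, the contribution to the integral vanishes, so the relevant case is a component meeting $Z := f^{-1}(0)$ on which $f$ is non-constant). Since $f$ is continuous and non-constant, $Z$ is a closed, proper, non-empty subset of $\Omega$, so $\partial Z \cap \Omega \ne \emptyset$. Fix $a \in \partial Z \cap \Omega$ and a ball $B(a,r)$ with $\overline{B(a,r)} \subset \Omega$ on which $f$ is Lipschitz with constant $L$. In polar coordinates $x = a + \rho\omega$, $\rho \in (0,r)$, $\omega \in S^{n-1}$, set $g_\omega(\rho) := f(a + \rho\omega)$. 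Since $|\partial_\rho f(a+\rho\omega)| = |\omega \cdot \nabla f(a+\rho\omega)| \le |\nabla f(a+\rho\omega)|$, integrating in polar coordinates yields
\[
\int_{B(a,r) \setminus Z}\bigg|\frac{\nabla f}{f}\bigg|^n dx \ge \int_{S^{n-1}}\!\!\int_0^r \chi_{\{g_\omega \ne 0\}}(\rho)\,\bigg|\frac{g_\omega'(\rho)}{g_\omega(\rho)}\bigg|^n \rho^{n-1}\,d\rho\,d\sigma(\omega).
\]

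The main task is to show that the inner one-dimensional integral equals $+\infty$ for every $\omega$ with $g_\omega \not\equiv 0$ on $[0,r]$. Pick such an $\omega$, let $(p,q) \subset [0,r]$ be a connected component of $\{g_\omega \ne 0\}$, and assume WLOG that $g_\omega > 0$ on $(p,q)$. If $p > 0$, the weight $\rho^{n-1} \ge p^{n-1} > 0$ on $(p,q)$, so it suffices to invoke the classical unweighted 1-D divergence: since $g_\omega(p) = 0$, the derivative $(\log g_\omega)' = g_\omega'/g_\omega$ fails to be in $L^1(p,q)$ (total variation of $\log g_\omega$ is infinite near $p$), and Hölder's inequality on the bounded interval $(p,q)$ upgrades this to $\int_p^q |g_\omega'/g_\omega|^n d\rho = +\infty$. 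The delicate case is $p = 0$, for which I would establish the following weighted lemma: if $h:[0,q]\to\R$ is Lipschitz with $h(0)=0$ and $h>0$ on $(0,q]$, then $\int_0^q |h'/h|^n \rho^{n-1}\,d\rho = +\infty$. The substitution $\sigma = \log\rho$, $H(\sigma) := h(e^\sigma)$, gives the identity $|h'/h|^n \rho^{n-1}\,d\rho = |H'/H|^n\,d\sigma$, reducing the lemma to $\int_{-\infty}^{\log q}|H'/H|^n d\sigma = +\infty$. The Lipschitz bound $h(\rho) \le L\rho$ becomes $\log H(\sigma) \le \log L + \sigma$, so $\log H(\sigma) \to -\infty$ at least linearly as $\sigma \to -\infty$, whence the total variation of $\log H$ on $[\sigma_0,\log q]$ is at least $|\sigma_0| - C$ for a fixed constant $C$. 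Hölder's inequality then gives
\[
\int_{\sigma_0}^{\log q}\bigg|\frac{H'}{H}\bigg|^n d\sigma \ge \frac{\big(\int_{\sigma_0}^{\log q}|H'/H|\,d\sigma\big)^n}{(\log q - \sigma_0)^{n-1}} \ge \frac{(|\sigma_0|-C)^n}{(\log q - \sigma_0)^{n-1}},
\]
which is of order $|\sigma_0|$ and diverges as $\sigma_0 \to -\infty$.

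To close, the set $\{\omega \in S^{n-1} : g_\omega \not\equiv 0 \text{ on }[0,r]\}$ is open in $S^{n-1}$ by continuity of $f$, and non-empty, for otherwise $f$ would vanish identically on $B(a,r)$, contradicting $a \in \partial Z$. Hence it has positive spherical measure, and integrating the $+\infty$ inner integral over it yields $+\infty$, proving the theorem. I expect the main obstacle to be precisely the radial weight $\rho^{n-1}$: it vanishes at the origin and on its own would threaten to convert the easy unweighted 1-D divergence into a potentially convergent weighted integral. The logarithmic substitution $\sigma = \log\rho$ combined with the linear rate $\log H(\sigma) \le \log L + \sigma$ supplied by the Lipschitz hypothesis is exactly what is needed to offset the $(n-1)$-power loss in Hölder's inequality and preserve divergence.
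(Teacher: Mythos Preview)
Your proof is correct and follows a genuinely different route from the paper's. Both arguments pass to polar coordinates at a point $a\in\partial Z\cap\Omega$ and reduce to a one-dimensional radial statement, but the core 1-D step differs. The paper argues by contradiction: assuming the integral is finite, Fubini gives $V(\cdot\,\omega)\rho^{(n-1)/n}\in L^n(0,r)$ for a.e.\ $\omega$, and a Gronwall--type uniqueness lemma (Lemma~2.1) then forces $\varphi(t)=f(a+t\omega)\equiv 0$ along almost every ray, hence $f\equiv 0$ on a ball, a contradiction; to make the inequality $|\varphi'|\le V|\varphi|$ hold even where $\varphi$ vanishes, the paper first replaces $f$ by $f^2$ (Lemma~2.2) so that $\nabla f=0$ on $Z$. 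Your argument is direct and avoids both devices: by working on a single connected component $(p,q)$ of $\{g_\omega\ne 0\}$ you never need the differential inequality at zeros, and the logarithmic substitution $\sigma=\log\rho$, combined with the Lipschitz bound $h(\rho)\le L\rho$, converts the weighted problem into an unweighted H\"older/total-variation estimate that even yields an explicit logarithmic blow-up rate. Two minor points to tighten: (i) the pointwise bound $|g_\omega'(\rho)|\le|\nabla f(a+\rho\omega)|$ requires $f$ to be differentiable at $a+\rho\omega$, which by Rademacher and Fubini holds only for a.e.\ $(\omega,\rho)$---the paper isolates exactly this as Lemma~2.3---so your open set of good $\omega$ should be intersected with this full-measure set before integrating; (ii) your $p=0$ lemma uses $\log h(q)$, which needs $h(q)>0$; if the component is $(0,q)$ with $g_\omega(q)=0$, simply replace $q$ by any $q'\in(0,q)$.
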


 Throughout, we denote the gradient $\nabla f$ of a Lipschitz function $f$, which exists a.e. by Rademacher's theorem, and its norm $|\nabla f|^2=f^2_{x_1}+...+f^2_{x_n}$. Also, it is a well-known fact that a (locally integrable) function is locally Lipschitz if and only if its distributional gradient $\nabla f$ is locally $L^{\infty}$.
On the other hand, any Lipschitz function on a set of a metric measure space can be extended to be a Lipschitz function over the whole space with the same Lipschitz constant; in particularly, for our purpose in this paper, we can always use the extension of a Lipschitz function $f$ in $\Omega$ to one in $\mathbb{R}^n$ with the same Lipschitz constant [4].

An immediate corollary of Theorem 1.2 is a uniqueness theorem of differential inequality of the gradient.

\begin{corollary}
Let $f$ be a locally Lipschitz function in an open set $\Omega$ satisfying
\begin{equation}
|\nabla f(x)|\leq V(x) |f(x)|,\,\, x\in\Omega, a.e.,
\end{equation}
 where $V\in L_{loc}^n(\Omega)$. If there is a point $x_0\in\Omega$ such that $f(x_0)=0$, then $f\equiv 0$ in $\Omega$.
\end{corollary}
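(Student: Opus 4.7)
The plan is to argue by contradiction and apply Theorem 1.2 on a small ball around $x_0$, exploiting that the hypothesis $|\nabla f|\le V|f|$ forces the divergent integral produced by that theorem to be finite. Since $f$ is continuous, the zero set $Z=\{x\in\Omega:f(x)=0\}$ is closed in $\Omega$; the main task is to show $Z$ is also open, so that by connectedness of $\Omega$ (which is the intended reading of the statement) $Z$ coincides with $\Omega$.

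To show $Z$ is open, I would fix $x_0\in Z$ and pick $r>0$ so small that $\overline{B_r(x_0)}\subset\Omega$; write $B=B_r(x_0)$. If $f\equiv 0$ on $B$, we are done at $x_0$, so suppose instead that $f$ is not identically zero on $B$. Then $f|_B$ is a non-constant locally Lipschitz function on the open set $B$ whose zero set contains $x_0$, and Theorem 1.2 applied on $B$ gives
$$\int_{B\setminus f^{-1}(0)}\left|\frac{\nabla f(x)}{f(x)}\right|^{n} dx=\infty.$$
On the other hand, from $|\nabla f(x)|\le V(x)|f(x)|$ a.e., one gets the pointwise estimate $|\nabla f|^{n}/|f|^{n}\le V^{n}$ a.e. on $\Omega\setminus f^{-1}(0)$, and hence
$$\int_{B\setminus f^{-1}(0)}\left|\frac{\nabla f(x)}{f(x)}\right|^{n} dx\le\int_{B}V(x)^{n}\,dx<\infty,$$
because $V\in L^{n}_{loc}(\Omega)$ and $\overline{B}$ is compact in $\Omega$. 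This contradicts the previous display, so $f\equiv 0$ on $B$, and $x_0$ is an interior point of $Z$.

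There is no serious obstacle; the argument is essentially immediate once one localizes properly so that the $L^{n}_{loc}$ hypothesis on $V$ can be used. The only fine point to check is that Theorem 1.2 is applicable on the smaller domain $B$: this requires $f|_B$ to be non-constant, which is guaranteed by the contradiction assumption together with $f(x_0)=0$. If $\Omega$ is not assumed connected, the same argument shows that $f\equiv 0$ on the connected component of $\Omega$ containing $x_0$, which is presumably the intended meaning of the conclusion.
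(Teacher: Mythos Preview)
Your argument is correct and matches the paper's intent: the corollary is stated there as ``an immediate corollary of Theorem 1.2'' with no further proof, and your localization-plus-contradiction argument is exactly the natural way to spell out that immediacy. Your remark about connectedness is also apt, since the paper's hypothesis only says ``open set.''
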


\begin{corollary}Let $\Omega$ be an open set in $\mathbb{R}^n$, and let $f$ be a non-constant locally Lipschitz  function on $\Omega$. If the zero set $\{x\in\Omega: f(x)=0\}$ of $f$ is not empty and  
\begin{equation}\label{po}
\int_{\Omega\setminus f^{-1}(0)}|\nabla \log|f(x)||^p\, dx<\infty.
\end{equation}
then, $p<n$
\end{corollary}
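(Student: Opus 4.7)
This corollary reduces to Theorem 1.2 combined with a Hölder inequality, together with a small localization argument to handle the fact that $\Omega$ may be unbounded. The case $p=n$ contradicts Theorem 1.2 directly, so assume for contradiction that $p>n$ and that the displayed integral is finite.

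First I would pass to a bounded subdomain on which Theorem 1.2 still applies. Choose a point $a\in\Omega$ lying in the topological boundary (relative to $\Omega$) of the zero set $Z:=f^{-1}(0)$; such a point exists because $Z$ is a nonempty closed proper subset of (the relevant connected component of) $\Omega$, so it has nonempty boundary there. Pick $r>0$ with $\overline{B_r(a)}\subset\Omega$ and set $\Omega_0=B_r(a)$. Since every neighborhood of $a$ meets both $Z$ and its complement, the restriction $f|_{\Omega_0}$ is non-constant, locally Lipschitz, and has $a$ as a zero, so Theorem 1.2 applies to $f|_{\Omega_0}$ and yields
\begin{equation*}
\int_{\Omega_0\setminus Z}\bigl|\nabla\log|f|\bigr|^n\,dx=\infty.
\end{equation*}

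Next, since $|\Omega_0|<\infty$ and $p/n>1$, Hölder's inequality with conjugate exponents $p/n$ and $p/(p-n)$ gives
\begin{equation*}
\int_{\Omega_0\setminus Z}\bigl|\nabla\log|f|\bigr|^n\,dx\leq |\Omega_0|^{(p-n)/p}\left(\int_{\Omega_0\setminus Z}\bigl|\nabla\log|f|\bigr|^p\,dx\right)^{n/p}.
\end{equation*}
Combining the two displays forces $\int_{\Omega_0\setminus Z}|\nabla\log|f||^p\,dx=\infty$, and hence $\int_{\Omega\setminus Z}|\nabla\log|f||^p\,dx=\infty$, contradicting the hypothesis.

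The only delicate point is the choice of $\Omega_0$: one must ensure the restricted function still falls under Theorem 1.2. This is why picking $a$ on the topological boundary of $Z$ (rather than an arbitrary zero of $f$) is essential --- at an interior zero, $f$ might vanish identically on a neighborhood, making the restricted function constant and Theorem 1.2 vacuous. Once this selection is made, the remainder of the argument is a one-line Hölder estimate, with no further obstacle.
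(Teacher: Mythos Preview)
Your argument is correct and is exactly the deduction the paper has in mind: Corollary 1.4 is stated without its own proof because it is immediate from Theorem 1.2, the case $p=n$ being Theorem 1.2 verbatim and the case $p>n$ following by H\"older on a bounded subdomain. Your localization to a ball $\Omega_0$ around a boundary point of $Z$ is precisely the right move (indeed the paper's proof of Theorem 1.2 already shows the integral diverges locally near any point of $\partial Z$), and your care in choosing $a\in\partial Z$ rather than an arbitrary zero is well-placed.
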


\begin{corollary}Let $\Omega$ be an open set in $\mathbb{R}^n$, and let $f\in W^{1,n}(\Omega)$. Then the exponential $e^f$ of $f$ cannot be Lipschitz unless $f$ is Lipschitz and $f\not=0$ in $\overline\Omega$.
\end{corollary}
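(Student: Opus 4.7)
The plan is to reduce the corollary to Theorem~1.1 applied to the auxiliary function $g := e^{f}$. Because $f$ is real-valued on $\Omega$, $g(x) > 0$ throughout $\Omega$, and the identity $\log|g(x)| = \log g(x) = f(x)$ holds pointwise on $\Omega$. Consequently the hypothesis $f \in W^{1,n}(\Omega)$ translates directly into $\log|g| \in W^{1,n}(\Omega)$. Under the assumption that $g$ is Lipschitz on $\Omega$, it extends (by McShane, say) to a Lipschitz function on $\overline\Omega$, and the second clause of Theorem~1.1 then forbids $g$ from vanishing anywhere in $\overline\Omega$; this is the nonvanishing assertion of the corollary, read as $e^f \neq 0$ on the closure.

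With $g$ bounded below by some $c > 0$ on every compact piece of $\overline\Omega$, one then inverts the relation $f = \log g$: since $\log$ is smooth, hence Lipschitz, on $[c,\infty)$, the composition $\log g$ is Lipschitz there, and therefore $f$ is Lipschitz (locally on $\overline\Omega$, and globally whenever $\Omega$ is bounded). This yields the two conclusions of the corollary simultaneously. The value $f = 0$ itself — corresponding to $e^f = 1$ — is not what is being forbidden, so the literal clause ``$f \neq 0$ in $\overline\Omega$'' is naturally understood as ``$e^f \neq 0$ in $\overline\Omega$,'' matching exactly the output of Theorem~1.1.

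No genuine obstacle is expected: the whole content of the corollary is delivered by Theorem~1.1 once one notices the identity $\log|e^{f}| = f$, and the remainder is the routine observation that a positive Lipschitz function bounded away from zero has a Lipschitz logarithm. The only mild bookkeeping is to make sure the Lipschitz extension of $g$ to $\overline\Omega$ is the one to which Theorem~1.1 is applied, so that the nonvanishing conclusion actually covers all of $\overline\Omega$ and not merely $\Omega$.
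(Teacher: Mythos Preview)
Your approach is correct and is exactly what the paper intends: the corollary is stated without a separate proof as a direct consequence of Theorem~1.1, and your reduction---set $g=e^{f}$, note $\log|g|=f\in W^{1,n}(\Omega)$, extend $g$ Lipschitz to $\overline\Omega$, then invoke the second clause of Theorem~1.1 and read back $f=\log g$---is the only natural reading. Your diagnosis that ``$f\neq 0$ in $\overline\Omega$'' must be read as ``$e^{f}\neq 0$ in $\overline\Omega$'' (equivalently, $f$ does not tend to $-\infty$ at the boundary) is also correct; the literal clause is a slip in the paper.
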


Theorem 1.2 is somehow cosmetically related to the following well-known result of J. Bourgain, H. Brezis and P. Mironescu [3].
\begin{theorem}\label{t2}[3]
Let $\Omega$ be a connected open set in $\mathbb{R}^n$, and let $f:\Omega\to \mathbb{R}$ be a non-constant measurable function. Then
$$\int_\Omega\int_\Omega \frac{|f(x)-f(y)|}{|x-y|^{n+1}}dxdy=\infty.$$
\end{theorem}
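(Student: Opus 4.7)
My plan is a three-step reduction: (i) truncation, (ii) a coarea/slicing reduction, and (iii) a one-dimensional dyadic divergence estimate at a measure-theoretic boundary point. Since $|T_Nf(x)-T_Nf(y)|\le|f(x)-f(y)|$ for the truncation $T_Nf=\max(-N,\min(N,f))$ and $T_Nf$ remains non-constant for $N$ large, I may assume $f$ is bounded. The layer-cake identity
\[
|f(x)-f(y)|=\int_{\mathbb{R}}\bigl|\chi_{\{f>t\}}(x)-\chi_{\{f>t\}}(y)\bigr|\,dt,
\]
combined with Fubini, reduces the theorem to proving that for every measurable $E\subset\Omega$ with $0<|E|<|\Omega|$,
\[
I(E):=\int_E\int_{\Omega\setminus E}\frac{dx\,dy}{|x-y|^{n+1}}=+\infty.
\]

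Next I would reduce to one dimension by slicing. Writing $y=x+r\omega$ with $r>0$ and $\omega\in\mathbb{S}^{n-1}$ gives $dy/|x-y|^{n+1}=r^{-2}\,dr\,d\sigma(\omega)$, and Fubini along the line $\{v+s\omega:s\in\mathbb{R}\}$ for $v\in\omega^\perp$ yields, after averaging the two orientations,
\[
I(E)=\tfrac{1}{2}\int_{\mathbb{S}^{n-1}}\int_{\omega^\perp}J(F_{v,\omega})\,dv\,d\sigma(\omega),\quad F_{v,\omega}:=\{s\in\mathbb{R}:v+s\omega\in E\},
\]
with $J(F):=\int_F\int_{G\setminus F}(s-t)^{-2}\,ds\,dt$ and $G=G_{v,\omega}:=\{s:v+s\omega\in\Omega\}$. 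The connectedness of $\Omega$ plus a path-joining argument (a polygonal path from a density-$1$ point of $E$ to one of $\Omega\setminus E$ contains at least one segment whose endpoints lie in opposite types) provides a direction $\omega$ and a positive-measure set of $v$'s for which both $F_{v,\omega}$ and $G\setminus F_{v,\omega}$ have positive measure inside a common connected component of $G$. Thus it suffices to prove: for any open interval $I\subset\mathbb{R}$ and measurable $F\subset I$ with $|F|,|I\setminus F|>0$, $J(F)=+\infty$.

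For the one-dimensional core, I would exploit the measure-theoretic boundary $\partial^*F$. The closed subsets $\{x\in I:\text{density}(F,x)=0\}$ and $\{x\in I:\text{density}(I\setminus F,x)=0\}$ cannot cover the connected interval $I$ without producing a nontrivial clopen decomposition, so $\partial^*F\neq\emptyset$. Fix $x_0\in\partial^*F$ and set $\rho(r):=(2r)^{-1}|F\cap(x_0-r,x_0+r)|$. Then $\rho$ is continuous in $r$ with $\limsup_{r\to 0^+}\rho(r)>0$ and $\liminf_{r\to 0^+}\rho(r)<1$, and the intermediate value theorem supplies a sequence $r_k\downarrow 0$ with $\rho(r_k)\in[c,1-c]$ for a fixed $c>0$. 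Extracting a lacunary subsequence with $r_{k+1}<r_k/3$ and using the disjoint annuli $A_k:=(x_0-r_k,x_0+r_k)\setminus(x_0-r_{k+1},x_0+r_{k+1})$, one obtains $|F\cap A_k|,|(I\setminus F)\cap A_k|\gtrsim r_k$, so the $A_k\times A_k$ contribution to $J(F)$ is bounded below by $|F\cap A_k|\cdot|(I\setminus F)\cap A_k|/(2r_k)^2\gtrsim c^2>0$. Summing these disjoint contributions forces $J(F)=+\infty$.

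The delicate step is the last one: producing the balanced scales $\{r_k\}$. By the Lebesgue density theorem almost every point of $I$ has density $0$ or $1$ with respect to $F$, so points of $\partial^*F$ form a measure-zero exceptional set and their existence must be derived from the connectedness of $I$ rather than from direct differentiation. Once such a point is fixed, the continuity of $\rho$ and the intermediate value theorem hand over the lacunary sequence cleanly, and the dyadic summation closes the argument; this is also where the critical exponent $n+1$ enters, since the annular bound $|A_k\times A_k|/(2r_k)^{n+1}\asymp \text{constant}$ is what makes the series over $k$ diverge.
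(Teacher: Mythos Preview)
The paper does not prove this statement: Theorem~1.6 is quoted as a known result of Bourgain--Brezis--Mironescu and attributed to reference~[3], with no proof supplied. So there is no ``paper's own proof'' to compare against, and your proposal must be judged on its own. The overall architecture (truncate, layer--cake to reduce to characteristic functions, slice to one dimension, then run a dyadic blow--up at an essential boundary point) is a standard and reasonable route, but two of your steps have real gaps.

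First, in the one--dimensional core you assert that $\{x\in I:\text{density}(F,x)=0\}$ and $\{x\in I:\text{density}(F,x)=1\}$ are \emph{closed}, and then invoke connectedness. This is false: the density function is only Borel measurable, not upper-- or lower--semicontinuous, so neither set need be closed (already for $F=(1/2,1)\subset(0,1)$ both sets are open, not closed). The conclusion $\partial^*F\neq\emptyset$ is correct, but it requires a different argument---for instance, pick $x_0$ in the common closure of the density--$1$ set and the density--$0$ set (this exists by connectedness of $I$), and if $x_0$ itself had density $0$, a sequence of density--$1$ points $a_k\to x_0$ would force $\rho(x_0,r_k)\gtrsim 1/3$ along scales $r_k\sim|a_k-x_0|$, a contradiction. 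Second, the polygonal--path step is incomplete: locating a tube $T_i$ in direction $\omega_i$ with $|E\cap T_i|>0$ and $|T_i\setminus E|>0$ does \emph{not} yield a positive--measure set of lines in direction $\omega_i$ that meet both $E$ and its complement. If $E\cap T_i$ happens to be (up to null sets) a product $D'\times[a,b]$ with $0<|D'|<|D_i|$, then every line parallel to $\omega_i$ lies entirely in $E$ or entirely outside it, and $J(F_{v,\omega_i})=0$ for a.e.\ $v$. A cleaner way to close this is to argue by contradiction from the slicing identity: if $I(E)<\infty$ then for a.e.\ $\omega$ and a.e.\ $v$ each component of $G_{v,\omega}$ carries $F_{v,\omega}$ of full or null measure, and one then shows (via Fubini and connectedness of $\Omega$) that this forces $|E|=0$ or $|\Omega\setminus E|=0$.
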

We make some remarks on the results above, which seem to be easy to state, but not exactly obvious to prove. First the integrability exponent $n$, the dimension of the space is the best possible for the results to hold if we take, for example, the function $f(x)=|x|^2$. Secondly, for positive functions, the integral can be arbitrarily small.
 In \cite{CGW93} A. Chang, M. Gursky, and T. Wolff proved the following calculus result, which is crucial for them to construct a counterexample of a geometric problem.
\begin{proposition}\cite[page 144]{CGW93}\label{PropCGW}
Suppose $n\geq 3$. Then for any $R>0, A, B>0, \epsilon>0$, there are $\delta>0$ and a smooth radial function $\psi\colon \mathbb{R}^n\to (0,+\infty)$ such that $\psi(x)=A$ when $|x|\geq R, \psi(x)=B$ when $|x|\leq \delta$, $\min(A,B)\leq \psi\leq \max(A,B)$, and 
\begin{equation}\label{integralsInCGW}
\int_{|x|\leq R}\Big{|}\psi^{-1}\frac{d\psi}{dx^i}\Big{|}^{n}<\epsilon \quad\quad\quad 1\leq i\leq n.
\end{equation}
\end{proposition}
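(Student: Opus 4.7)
The plan is to reduce to a one-dimensional calculation via the logarithm. Setting $u=\log\psi$, we have $\psi^{-1}\partial_i\psi=\partial_i u$, so it suffices to control $\int_{|x|\leq R}|\nabla u|^n\,dx$ (since $|\partial_i u|\leq|\nabla u|$ for each $i$). The task becomes: construct a smooth radial $u$ on $\mathbb{R}^n$ equal to $\log A$ on $\{|x|\geq R\}$, equal to $\log B$ on $\{|x|\leq\delta\}$, monotone in between, with small $n$-energy on $B_R$. Via $\psi=e^u$, monotonicity automatically yields $\min(A,B)\leq\psi\leq\max(A,B)$ together with positivity of $\psi$.

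I would use the ansatz $u(x)=h(\log|x|)$, where $h\colon\mathbb{R}\to\mathbb{R}$ is smooth and non-decreasing (swap the roles of $A,B$ if necessary), constant equal to $\log B$ on $(-\infty,\log\delta]$ and constant equal to $\log A$ on $[\log R,\infty)$. A direct computation gives $|\nabla u(x)|=|h'(\log|x|)|/|x|$, so after the substitution $t=\log r$ the $n$-energy collapses to a one-dimensional integral:
\[
\int_{|x|\leq R}|\nabla u|^{n}\,dx=\omega_{n-1}\int_{0}^{R}\frac{|h'(\log r)|^{n}}{r}\,dr=\omega_{n-1}\int_{-\infty}^{\log R}|h'(t)|^{n}\,dt.
\]
This is the one-dimensional $n$-energy of $h$, whose total rise $|\log(A/B)|$ is prescribed but can be spread over the arbitrarily long log-interval $[\log\delta,\log R]$.

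To make this small, fix once and for all a smooth non-decreasing profile $\eta\colon\mathbb{R}\to[0,1]$ with $\eta\equiv 0$ on $(-\infty,0]$ and $\eta\equiv 1$ on $[1,\infty)$. With $L:=\log(R/\delta)$, define
\[
h(t):=\log B+(\log A-\log B)\,\eta\!\left(\tfrac{t-\log\delta}{L}\right),
\]
which is smooth, monotone, and satisfies $\|h'\|_{\infty}\leq\|\eta'\|_{\infty}\,|\log(A/B)|/L$. Hence
\[
\int_{-\infty}^{\log R}|h'(t)|^{n}\,dt\leq\|h'\|_{\infty}^{n-1}\!\int|h'|\,dt\;\lesssim\;\frac{|\log(A/B)|^{n}}{L^{n-1}},
\]
and choosing $\delta$ small enough that $L=\log(R/\delta)$ is large forces the full integral below $\epsilon$.

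The main obstacle is minimal: once the logarithmic change of scale is identified, the argument reduces to bookkeeping. The underlying reason it works is that the Euler--Lagrange minimizer of the radial $n$-energy with fixed boundary values is precisely a logarithmic profile $u(x)=\alpha\log|x|+\beta$; the smoothed ansatz above is a regularization of this optimizer, and its sharp rate $|\log(A/B)|^{n}/L^{n-1}\to 0$ as $\delta\to 0$ is what drives the estimate.
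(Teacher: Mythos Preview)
Your argument is correct. The paper does not actually prove Proposition~\ref{PropCGW}; it is quoted verbatim from \cite[p.~144]{CGW93} and used only as an illustration that the integral in Theorem~1.2 can be made arbitrarily small for strictly positive functions. So there is no in-paper proof to compare against.

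For the record, your construction is the standard one and works for all $n\geq 2$ (the hypothesis $n\geq 3$ is an artifact of the original geometric context in \cite{CGW93}). The key points---that $\psi^{-1}\nabla\psi=\nabla\log\psi$, that the radial $n$-energy is conformally invariant and reduces under $t=\log r$ to $\omega_{n-1}\int|h'(t)|^n\,dt$, and that this can be made small by stretching a fixed smooth step over a log-interval of length $L=\log(R/\delta)\to\infty$---are all handled cleanly. The only item worth stating explicitly (you have it implicitly) is smoothness at the origin: since $h$ is constant on $(-\infty,\log\delta]$, the function $u(x)=h(\log|x|)$ is identically $\log B$ on $\{|x|\leq\delta\}$, so the apparent singularity of $\log|x|$ is irrelevant and $\psi=e^{u}$ is $C^\infty(\mathbb{R}^n)$.
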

This example is, equivalently, saying that there are positive functions for which the integral
 $$\int_{|x|<R}|\nabla \log|\psi(x)||^n\, dx$$ 
 can be made arbitrarily small. On the other hand,
Theorem \ref{t1} has the following interesting interpretation as an extension result: for every Lipschitz $f$  and every $a\in \Omega$, the function $\frac{|\nabla f(x)|}{|f(x)-f(a)|}$ cannot be extended to a function in $L^n(\Omega)$. We will actually prove that the integral in \eqref{po} diverges around any point of the boundary of $f^{-1}(f(a))$. Hence $\frac{|\nabla f(x)|}{|f(x)-f(a)|}$ cannot be extended to an $L^n$ function beyond its natural domain of definition $\Omega\setminus f^{-1}(f(a))$. 

As a curious consequence, given any closed set $A\subset\mathbb{R}^n$, it is possible to construct a continuous function on $\mathbb{R}^n\setminus A$ that cannot be extended locally to a $L^n$ function past any point of the boundary of $A$. Indeed, one just needs to consider the quotient $\frac{|\nabla f|}{|f|}$, where $f$ is a smooth function in $\mathbb{R}^n$ whose zeroset is $A$. Note that such an $f$ exists by the Whitney extension theorem [2].


\section{Proof of Theorem 1.2}
We first begin with a uniqueness theorem for Lipschitz functions in one real variable over intervals which is tailored for applications in polar coordinates in order to prove the results in this paper. Recall that a Lipschitz function in $[0,1]$ is differentiable almost everywhere and satisfies $f(b)-f(a)=\int_a^b f'(x)dx$ for any $a, b\in [0,1]$.
\begin{lemma}
Let $\varphi$ be a Lipschitz function over $[0,1]$ with $\varphi(0)=0$. Assume that there exists $p\geq 1$ and a non-negative function $\lambda\in L^p(0,1)$ such that\begin{equation}\label{h}
|\varphi'(x)|\leq \lambda(x)\,|\varphi(x)|\, x^{\frac{1-p}{p}}\quad\quad \forall x\in(0,1), a.e.
\end{equation}
Then $\varphi\equiv 0$ in $[0,1]$.
\end{lemma}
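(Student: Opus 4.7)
The plan is a bootstrap iteration near $x=0$, followed by extension to all of $[0,1]$ via classical Gronwall. The case $p=1$ is immediate: then the hypothesis reads $|\varphi'|\leq \lambda|\varphi|$ with $\lambda\in L^1(0,1)$ and $\varphi(0)=0$, so $\varphi\equiv 0$ by classical Gronwall. From here on I work in the genuinely singular regime $p>1$.

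Let $L$ denote the Lipschitz constant of $\varphi$, so $|\varphi(t)|\leq Lt$, and set $\Lambda(x)=\bigl(\int_0^x\lambda(t)^p\,dt\bigr)^{1/p}$, which is nondecreasing and tends to $0$ as $x\to 0^{+}$ by absolute continuity of the integral. With $K:=((p-1)/p)^{(p-1)/p}$ I claim by induction on $k$ that
\[
|\varphi(x)|\;\leq\; L\,x\,(K\Lambda(x))^{k},\qquad x\in[0,1],\ k\geq 0.
\]
The base case $k=0$ is the Lipschitz bound. For the inductive step, insert the level-$k$ bound into $|\varphi(x)|\leq \int_0^{x}\lambda(t)|\varphi(t)|t^{(1-p)/p}\,dt$, so that the integrand is controlled by $L\,\lambda(t)\,(K\Lambda(t))^{k}\,t^{1/p}$. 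Apply Hölder's inequality with conjugate exponents $p$ and $p/(p-1)$ to the factors $\lambda(t)$ and $(K\Lambda(t))^{k}t^{1/p}$, use monotonicity to replace $\Lambda(t)^{k}$ by $\Lambda(x)^{k}$ inside the second factor, and compute $\bigl(\int_0^{x}t^{1/(p-1)}\,dt\bigr)^{(p-1)/p}=Kx$. This delivers the bound at level $k+1$.

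Now pick $\delta>0$ so small that $K\Lambda(\delta)<1$. For any $x\in[0,\delta]$, the estimate at level $k$ gives $|\varphi(x)|\leq L\delta\,(K\Lambda(\delta))^{k}$; letting $k\to\infty$ forces $\varphi(x)=0$, so $\varphi\equiv 0$ on $[0,\delta]$. On $[\delta,1]$ the factor $t^{(1-p)/p}$ is bounded by $\delta^{(1-p)/p}$ and $\lambda\in L^{p}(0,1)\subset L^{1}(\delta,1)$, so the coefficient $h(t)=\lambda(t)t^{(1-p)/p}$ in the differential inequality $|\varphi'|\leq h|\varphi|$ lies in $L^{1}(\delta,1)$. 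Since $\varphi(\delta)=0$, classical Gronwall concludes $\varphi\equiv 0$ on $[\delta,1]$, and hence on all of $[0,1]$.

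The main obstacle is that the coefficient $\lambda(t)t^{(1-p)/p}$ itself is not integrable at the origin in general---a direct Hölder on $\lambda\in L^{p}$ and $t^{(1-p)/p}$ produces the divergent $\int_0^{x}t^{-1}\,dt$---so classical Gronwall fails at the singular endpoint. The bootstrap avoids this by using the Lipschitz bound $|\varphi(t)|\leq Lt$ to supply the extra factor of $t$ that converts $t^{(1-p)/p}$ into the integrable $t^{1/p}$; each additional iteration then trades integrability for another copy of the small factor $\Lambda(x)\to 0$, giving geometric decay on any interval where $K\Lambda<1$.
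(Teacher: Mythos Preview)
Your argument is correct. The paper takes a different, somewhat slicker route: it sets $\delta=\sup\{d:\varphi\equiv 0\text{ on }[0,d]\}$, uses H\"older once to get $|\varphi(x)|^p\leq x^{p-1}\int_\delta^x|\varphi'|^p$, multiplies by $\lambda^p(x)x^{1-p}$ and integrates to obtain
\[
\int_\delta^s\lambda^p(x)|\varphi(x)|^p x^{1-p}\,dx\leq\Bigl(\int_\delta^s\lambda^p\Bigr)\Bigl(\int_\delta^s\lambda^p(x)|\varphi(x)|^p x^{1-p}\,dx\Bigr),
\]
and then cancels the (finite, nonzero) common factor to force $1\leq\int_\delta^s\lambda^p$ for $s$ arbitrarily close to $\delta$, a contradiction. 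So the paper's proof is a one-shot maximal-vanishing-interval argument that handles all $p\geq 1$ uniformly and never iterates. Your bootstrap, by contrast, is constructive: it produces the explicit quantitative bound $|\varphi(x)|\leq Lx(K\Lambda(x))^k$ for every $k$, which is more informative and makes transparent exactly how the Lipschitz bound supplies the missing factor of $t$ that tames the singularity. The cost is the extra case split at $p=1$ and the two-stage structure (iteration near $0$, Gronwall away from $0$). Both arguments ultimately rest on the same mechanism---H\"older pairing $\lambda\in L^p$ against $t^{1/p}$ coming from the Lipschitz bound---but package it differently.
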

\begin{proof} Let $\delta=\sup\{d\in[0,1]\,\vert\, \varphi\equiv 0 \text{ in }[0,d]\}$. By the fundamental theorem for Lipschitz functions and H\"older's inequality, we have
\begin{equation}
|\varphi(x)|\leq\int_{\delta}^{x}|\varphi'(t)|\, dt\leq\bigg(\int_{\delta}^x|\varphi'(t)|^p\, dt\bigg)^{\frac{1}{p}}\bigg(\int_{\delta}^x 1^q\, dt\bigg)^{\frac{1}{q}},\quad\quad \frac{1}{p}+\frac{1}{q}=1.
\end{equation}
Hence
\begin{equation}\label{1}
|\varphi(x)|^p\leq x^{\frac{p}{q}}\int_{\delta}^x|\varphi'(t)|^p\, dt=x^{p-1}\int_{\delta}^x|\varphi'(t)|^p\, dt.
\end{equation}
We multiply both sides of \eqref{1} by the function $\lambda^p(x)$. Note that we can assume without loss of generality that $\lambda$ is non-vanishing. Indeed, if that is not the case, we can just replace $\lambda$ with $1+\lambda$. We obtain
\begin{equation}\label{2}
\lambda^p(x)\,|\varphi(x)|^p\leq \,x^{p-1}\,\lambda^p(x)\int_{\delta}^x|\varphi'(t)|^p\, dt.
\end{equation}
Let $s\in (\delta,1)$. Integrating in the variable $x$ on both sides of \eqref{2} gives
\begin{equation}\label{3}
\int_{\delta}^s\lambda^p(x)\,|\varphi(x)|^p\, x^{1-p}\, dx\leq \int_{\delta}^s\bigg{(}\lambda^p(x)\int_{\delta}^x|\varphi'(t)|^p\, dt\bigg{)}\,dx.
\end{equation}
Since $x\leq s$, then \eqref{3} implies
\begin{equation}\label{6}
\int_{\delta}^s\lambda^p(x)\,|\varphi(x)|^p\, x^{1-p}\, dx\leq \bigg(\int_{\delta}^s \lambda^p(x)\, dx\bigg)\bigg(\int_{\delta}^s|\varphi'(x)|^p\, dx\bigg). 
\end{equation}
By \eqref{h}, we have that 
\begin{equation}\label{7}
\int_{\delta}^s\lambda^p(x)\,|\varphi(x)|^p\, x^{1-p}\, dx\leq  \bigg(\int_{\delta}^s \lambda^p(x)\, dx\bigg)\,\bigg(\int_{\delta}^s\lambda^p(x)\,|\varphi(x)|^p\, x^{1-p}\, dx\bigg).
\end{equation}
Since $\varphi$ is Lipschitz and $\varphi(0)=0$, we have that $|\varphi(x)|\leq C|x|$ for some $C$ for $x\in [0,1]$, and therefore the function $|\varphi(x)|^p\, x^{1-p}$ is bounded. Since $\lambda\in L^p(0,1)$, we conclude that
\begin{equation}
\int_{\delta}^s\lambda^p(x)\,|\varphi(x)|^p\, x^{1-p}dx<\infty \quad\quad \forall s\in(\delta,1).
\end{equation}
 and we can find a sequence of points $s_j\in (\delta,1), s_j\to \delta$, such that
\begin{equation}\label{8}
\int_{\delta}^{s_j}\lambda^p(x)\,|\varphi(x)|^p\, x^{1-p}\, dx\neq 0 \quad \forall j.
\end{equation}  
Equation \eqref{7} then yields 
\begin{equation}\label{final}
1\leq\int_{\delta}^{s_j}\lambda^p(x)\,dx \quad \forall j.
\end{equation}
Since $\lambda\in L^p(0,1)$, then letting $s_j\to \delta$ in \eqref{final} leads to a contradiction. 
\end{proof}
Rademacher's theorem says a Lipschitz function is differentiable almost everywhere. The following simple, but useful lemma tells where the square of a Lipschitz function could be differentiable.
\begin{lemma}
 Let $f$ be a locally Lipschitz function in a domain. Then the square function $g=f^2$ is differentiable wherever $f$ vanishes and in fact $\nabla g(x)=0$ there. 
\end{lemma}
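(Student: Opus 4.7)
The plan is to verify differentiability directly from the definition at an arbitrary zero of $f$, using nothing more than the Lipschitz estimate $|f(x)-f(y)|\le L|x-y|$.

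Fix a point $x_0$ in the domain with $f(x_0)=0$, so $g(x_0)=f(x_0)^2=0$. I would like to show that for the linear map $0\colon \R^n\to \R$,
\begin{equation*}
g(x_0+h)-g(x_0)-0\cdot h = o(|h|)\quad\text{as }h\to 0,
\end{equation*}
which would simultaneously prove differentiability of $g$ at $x_0$ and the vanishing of $\nabla g(x_0)$. Since $g(x_0)=0$, the left-hand side reduces to $f(x_0+h)^2$.

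Now I would invoke the local Lipschitz bound: on any ball around $x_0$ contained in the domain there is a constant $L$ with $|f(x_0+h)|=|f(x_0+h)-f(x_0)|\le L|h|$. Squaring gives $f(x_0+h)^2\le L^2|h|^2$, which is $o(|h|)$ as $h\to 0$. This yields the claim.

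There is no real obstacle here; the only thing to note is that even though $f$ itself may fail to be differentiable at $x_0$ (e.g.\ $f(x)=|x|$ at the origin), the squaring smooths out the singularity because the quadratic dominates any linear defect. The argument is purely pointwise and does not require $x_0$ to be a Rademacher point of $f$.
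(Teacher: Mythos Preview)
Your proof is correct and is essentially identical to the paper's own argument: both verify directly from the definition of differentiability that $\dfrac{g(x_0+h)-g(x_0)}{|h|}=\dfrac{f(x_0+h)^2}{|h|}\le L^2|h|\to 0$, using only the local Lipschitz bound $|f(x_0+h)-f(x_0)|\le L|h|$.
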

\begin{proof} Let $x_0$ be such that $f(x_0)=0$. We claim that $g=f^2$ is differentiable at $x_0$ and $\nabla g(x_0)=0$. Indeed, by the definiton of differentiability,
$$\lim_{x\to x_0}\frac{g(x)-g(x_0)-0\cdot(x-x_0)}{|x-x_0|}=\lim_{x\to x_0}\frac{f^2(x)}{|x-x_0|}=0,$$
where we have used $f(x)=f(x)-f(x_0)=O(|x-x_0|)$ because of  $f$ being locally Lipschitz at $x_0$.
\end{proof}
\begin{lemma} Let $A$ be a measurable set of measure zero in the unit ball in $\mathbb{R}^n$. Then for almost all $\omega\in S^{n-1}$,the unit sphere, the set of intersection of $A$ with the ray $\{r\omega: 0\leq r\leq 1\}$ is of measure zero in the line measure.
\end{lemma}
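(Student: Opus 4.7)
The plan is to reduce the statement to a direct application of Fubini--Tonelli in polar coordinates. Denote by $\chi_A$ the characteristic function of $A$, by $\sigma$ the standard surface measure on $S^{n-1}$, and by $m_1$ the one-dimensional Lebesgue measure on $[0,1]$. Since $A$ is measurable with $|A|=0$, I start from the polar-coordinate identity
\begin{equation}
0 = \int_{B(0,1)} \chi_A(x)\, dx = \int_{S^{n-1}} \int_0^1 \chi_A(r\omega)\, r^{n-1}\, dr\, d\sigma(\omega),
\end{equation}
which is valid because $(r,\omega)\mapsto r\omega$ is a measurable map and $\chi_A\geq 0$, so Tonelli applies regardless of integrability concerns.

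Next I would argue that the inner integrand is non-negative, so the inner integral
\begin{equation}
F(\omega) := \int_0^1 \chi_A(r\omega)\, r^{n-1}\, dr
\end{equation}
is a non-negative measurable function on $S^{n-1}$ whose integral vanishes. Therefore $F(\omega) = 0$ for $\sigma$-almost every $\omega \in S^{n-1}$. For each such $\omega$, because the weight $r^{n-1}$ is strictly positive on $(0,1]$, vanishing of $F(\omega)$ forces $\chi_A(r\omega) = 0$ for $m_1$-a.e.\ $r\in(0,1]$. Adding back the single endpoint $r=0$ does not change the measure, so the set $\{r\in[0,1] : r\omega\in A\}$ has line measure zero for $\sigma$-a.e.\ $\omega$, which is exactly the conclusion.

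The only potentially delicate point is confirming measurability of the set $\{(r,\omega) : r\omega \in A\}$; this is immediate once we observe that the map $(r,\omega)\mapsto r\omega$ from $[0,1]\times S^{n-1}$ into $\overline{B(0,1)}$ is continuous, so the preimage of a Lebesgue-measurable set $A$ is measurable with respect to the product measure (working, if one wishes, with the completion, or by first reducing to a $G_\delta$ set containing $A$ of the same measure). With that standard measurability remark in place, the argument above is essentially just Fubini--Tonelli applied to a non-negative integrand.
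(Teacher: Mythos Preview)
Your argument is correct and follows exactly the same route as the paper: write $0=|A|$ in polar coordinates via Fubini--Tonelli applied to $\chi_A$, deduce that the inner radial integral vanishes for $\sigma$-a.e.\ $\omega$, and then use positivity of the weight $r^{n-1}$ on $(0,1]$ to conclude. The only difference is that you spell out the measurability issue (reducing to a Borel null superset), a point the paper leaves implicit.
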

\begin{proof}
Let $\chi_A$ be the characteristic function of the set $A$. We have $0=|A|=\int_{|x|<1} \chi_A(x)dx=\int_{S^{n-1}}\int_0^1\chi_A(r\omega)r^{n-1}drd\omega$. By Fubini's theorem, we conculde that for almost all $\omega\in S^{n-1}$, $\int_0^1\chi_A(r\omega)r^{n-1}dr=0$, which is the desired result: $|A\cap\{r\omega\}|=0$.
\end{proof}
\begin{theorem}
Let $\Omega\subset\mathbb{R}^n$ be an open set. Let $Z$ be the zero set of $f$ in $\Omega$, that is, $Z=\{x\in \Omega\,\vert\, f(x)=0\}$. Assume $Z\neq\emptyset$, $Z\neq\Omega$. Then 
\begin{equation}\label{hyp}
\int_{\Omega\setminus Z}\bigg{|}\frac{\nabla f(x)}{f(x)}\bigg{|}^n\, dx=\infty.
\end{equation}
\end{theorem}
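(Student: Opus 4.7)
The plan is to argue by contradiction, reducing to the one-dimensional uniqueness Lemma 2.1 by slicing along radial lines emanating from a boundary point of the zero set $Z$.

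First, since $Z$ is a nonempty closed proper subset of $\Omega$ (passing to a suitable connected component if needed), I would pick $x_0\in\partial Z\cap\Omega$ and $R>0$ with $\overline{B(x_0,R)}\subset\Omega$. Assuming for contradiction that
\[
\int_{B(x_0,R)\setminus Z}\bigg|\frac{\nabla f(x)}{f(x)}\bigg|^n\,dx<\infty,
\]
I would rewrite the integral in polar coordinates centered at $x_0$ and apply Fubini to extract a full-measure set of directions $\omega\in S^{n-1}$ for which the radial integral $\int_0^R(|\nabla f|/|f|)^n(x_0+r\omega)\,r^{n-1}\,dr$ is finite.

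Guided by Lemma 2.2, I would then pass to $g=f^2$ and set $\psi_\omega(r)=g(x_0+r\omega)$, which is Lipschitz on $[0,R]$ with $\psi_\omega(0)=0$ since $f(x_0)=0$. Applying Lemma 2.3 to the measure-zero set in $B(x_0,R)$ where $g$ fails to be differentiable, for a.e.\ $\omega$ the identity $\psi_\omega'(r)=\nabla g(x_0+r\omega)\cdot\omega$ holds a.e.\ in $(0,R)$. I would next define
\[
\lambda(r)=\frac{|\psi_\omega'(r)|}{\psi_\omega(r)}\,r^{(n-1)/n}\quad\text{on }\{\psi_\omega>0\},\qquad \lambda(r)=0\quad\text{elsewhere}.
\]
Since $\nabla g=2f\nabla f$ where $f\neq 0$, the pointwise bound $\lambda(r)\leq 2(|\nabla f|/|f|)(x_0+r\omega)\,r^{(n-1)/n}$ combined with the finiteness of the radial integral yields $\lambda\in L^n(0,R)$. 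At interior zeros of $\psi_\omega$, Lemma 2.2 forces $\psi_\omega'(r)=0$, so the differential inequality $|\psi_\omega'(r)|\leq \lambda(r)\,\psi_\omega(r)\,r^{(1-n)/n}$ holds a.e.\ in $(0,R)$ (with equality on $\{\psi_\omega>0\}$). Invoking Lemma 2.1 with $p=n$ then gives $\psi_\omega\equiv 0$, i.e.\ $f$ vanishes along the entire ray from $x_0$ in direction $\omega$.

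Finally, since the set of such $\omega$ has full measure in $S^{n-1}$, it is dense, and continuity of $f$ forces $f\equiv 0$ on all of $B(x_0,R)$, contradicting $x_0\in\partial Z$. The main obstacle is handling interior zeros of $\psi_\omega$ on $(0,R)$, which would otherwise break the differential inequality and prevent invocation of Lemma 2.1; the square-and-slice trick via Lemmas 2.2 and 2.3 is precisely the device that suppresses the radial derivative on those zeros and salvages the one-dimensional uniqueness argument.
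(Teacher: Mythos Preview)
Your proposal is correct and follows essentially the same route as the paper: contradiction, reduction to $g=f^2$ via Lemma~2.2, polar slicing around a point of $\partial Z\cap\Omega$, Fubini and Lemma~2.3 to find good rays, and then Lemma~2.1 with $p=n$ on each ray to force $f\equiv 0$ on a ball. The only cosmetic differences are that the paper defines the potential $V$ globally (setting $V=0$ on $Z$) and then restricts to rays, whereas you define $\lambda$ ray-by-ray from $\psi_\omega$ itself; and the paper relabels $g$ as $f$ after the squaring step while you keep both symbols.
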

\begin{proof} First we make a subtle observation that in order to prove the theorem it suffices to prove it for $g=f^2$ since 
$$\int_{\Omega\setminus Z}\bigg{|}\frac{\nabla g(x)}{g(x)}\bigg{|}^n dx=2^n\int_{\Omega\setminus Z}\bigg{|}\frac{\nabla f(x)}{f(x)}\bigg{|}^n\, dx.$$
Therefore if $(1.1)$ is true for $g=f^2$, it also true for $f$. Hence we only need to prove $(1.1)$ for functions that are the sqaure of a Lipschitz function. Then by Lemma 2.1, the gradient is 0 whenever the function is 0. For the rest of the proof we assume $\nabla f(x)=0$ whenever $f=0$.
Let 
\begin{equation}
V(x)=\begin{cases}
\Big{|}\frac{\nabla f(x)}{f(x)}\Big{|}\quad x\in \Omega\setminus Z\\
0\quad \quad \quad\,\,\,x\in Z.
\end{cases}
\end{equation}
Note that $V$ is a measurable function in $\Omega$. Assume now by contradiction that \eqref{hyp} is false. Then \begin{equation}
\int_{\Omega} V^n \, dx= \int_{\Omega\setminus Z}\bigg{|}\frac{\nabla f(x)}{f(x)}\bigg{|}^n\, dx< \infty,
\end{equation}
and therefore $V\in L^n(\Omega)$. Choose a point $x_0\in\partial Z$ and a ball $B(x_0,r_0)$ of radius $r_0>0$ centered at $x_0$ such that $B(x_0,r_0)\subset\Omega$. We assume, after a translation and scaling,  that $x_0$ is the origin and the radius $r_0$ is equal to $1$. Hence
\begin{equation}\label{fin}
\int_{B(0,1)}V^n\, dx=\int_{S^{n-1}}\int_{0}^1 V^d(r\omega)r^{n-1}\, drd\omega.
\end{equation}

Since the integral \eqref{fin} is finite, then Fubini's theorem implies that for almost all $\omega\in S^{n-1}$ we have 
\begin{equation}\label{a}
\int_0^1 V^d(r\omega)r^{n-1}\, dr<\infty.
\end{equation}
By Rademacher's theorem, we set $A$ to be the set where $\nabla f(x)$ doest not exist at $x$ so that $A$ is of measure zero. 
Choose $\omega_0\in S^{n-1}$ is such that \eqref{a} holds, that is, $V(r\omega)r^{\frac{n-1}{n}}\in L^d(0,1).$ and at the same time, by Lemma 2.3 we can choose the same $\omega_0\in S^{n-1}$ such that $\nabla f(x)$ exists a.e. on the line (ray) $\{r\omega_0\}$.
Let \[\varphi(t):=f(t\omega_0), \quad t\in [0,1].\]
It is evident that $\varphi(t)$ is Lipschitz in $t$ because of $f$ locally Lipschitz.
Then 
for differential points of $f$, and applying the chain rule there we have 
$$\varphi'(t)=\nabla f(t\omega_0)\cdot\omega_0$$
which implies, for a.e. $t$, 
\begin{equation}
|\varphi'(t)|\leq |\nabla f(t\omega_0)|.
\end{equation}
By the definition of $V$, we have 
\begin{equation}
|\varphi'(t)|\leq V(t\omega_0)|f(t\omega_0)|=V(t\omega_0)|\varphi(t)|\quad \text{for }\,f(t\omega_0)\neq 0.
\end{equation}
However, when $f(t\omega_0)= 0$, we have by the observation at the begining of the proof, $\nabla f(r\omega_0)=0$ and therefore $\varphi'(t)=0$. Hence 
we have shown that 
$$|\varphi'(t)|\leq V(t\omega_0)|\varphi(t)|=V(t\omega_0)t^{\frac{n-1}{n}}|\varphi(t)|t^{-\frac{n-1}{n}}$$ holds for a.e. $t$. By Lemma 2.1, with $\lambda(t)=V(t\omega_0)t^{\frac{n-1}{n}}$ and $p=n$, $\varphi(t)\equiv 0$, and it implies $f\equiv 0$ since $\omega_0$ is arbitrary off a measure zero and $x_0$ in the boundary of $Z$, a contradiction.

\end{proof}

\section{Proof of Theorem 1.1}
Here before we prove Theorem 1.1, which is a simple consequence of Theorem 1.2, we recall that a function belongs to $W^{1,p}(\Omega)$ if $f\in L^p(\Omega)$ and its weak derivative $\nabla f$ belongs to $L^p(\Omega)$. Now we prove Theorem 1.1.
If $\log |f(x)-f(a)|$ does not belong to $L^p(\Omega)$, we are done. If $\log |f(x)-f(a)|$  belongs to $L^n(\Omega)$,
then apply Theorem 1.2 to conclude $\nabla \log |f(x)-f(a)|$ does not belong to $L^n(\Omega)$.

For the second part, first we extend $f$ to be a Lipschitz function defined in $\mathbb{R}^n$. If $f(x)$ vanishes at an point in the closure of $\Omega$, we apply Theorem 1.2 to the exntended function to get a contradiction.

\section{Generalization to Lipschitz mappings}
All results proved so far can be generalized to Lipschitz mappings. First we say $f:\Omega:\to \mathbb{R}^m$ is a Lipschitz mapping if each of the components is a Lipchitz function. Now we can state the mapping version of Theorem 1.2.
\begin{theorem}\label{t1}
Let $\Omega$ be an open set in $\mathbb{R}^n$, and let $f:\Omega:\to \mathbb{R}^m$ be a non-constant locally Lipschitz  mapping on $\Omega$. If the common zero set $\{x\in\Omega: f(x)=0\}$ of the mapping $f$ is not empty, then 
\begin{equation}\label{po}
\int_{\Omega\setminus f^{-1}(0)}\bigg|\frac{\nabla f(x)}{f(x)}\bigg|^{n}\, dx=\infty,
\end{equation}
where we have $|\nabla f|^2=|\nabla f_1|^2+...+|\nabla f_m|^2$ if $f=(f_1, ..., f_m)$.
\end{theorem}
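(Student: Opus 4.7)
The plan is to reduce the vector-valued statement to the scalar Theorem~1.2 (equivalently Theorem~2.4) by considering the scalar function
\[
g(x) := |f(x)|^2 = f_1(x)^2 + \cdots + f_m(x)^2.
\]
Since each $f_i$ is locally Lipschitz and hence locally bounded, each $f_i^2$ is locally Lipschitz, so $g$ is locally Lipschitz on $\Omega$. Its zero set $g^{-1}(0)$ coincides with the common zero set $f^{-1}(0)$, which is nonempty by hypothesis; and because $f$ is non-constant with a common zero, $g$ takes both zero and positive values, so $g$ is non-constant. Thus Theorem~1.2 applies to $g$ and yields
\[
\int_{\Omega\setminus f^{-1}(0)} \Bigl|\frac{\nabla g(x)}{g(x)}\Bigr|^n\, dx = \infty.
\]

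Next I would establish the pointwise comparison linking $\nabla g$ and $\nabla f$. At any point where each $f_i$ is differentiable, the chain rule gives
\[
\nabla g = 2\sum_{i=1}^m f_i\,\nabla f_i,
\]
and a componentwise application of Cauchy--Schwarz in $\mathbb{R}^m$ yields
\[
|\nabla g|^2 = 4\sum_{j=1}^n\Bigl(\sum_{i=1}^m f_i\,\partial_j f_i\Bigr)^2
\leq 4\Bigl(\sum_i f_i^2\Bigr)\sum_{j=1}^n\sum_{i=1}^m(\partial_j f_i)^2 = 4|f|^2|\nabla f|^2,
\]
so $|\nabla g|\leq 2|f||\nabla f|$ almost everywhere on $\Omega\setminus f^{-1}(0)$. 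Dividing by $g = |f|^2$ gives
\[
\frac{|\nabla g(x)|}{g(x)} \leq 2\,\frac{|\nabla f(x)|}{|f(x)|} \quad\text{a.e. on }\Omega\setminus f^{-1}(0).
\]

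Finally I raise to the $n$-th power and integrate:
\[
\int_{\Omega\setminus f^{-1}(0)}\Bigl|\frac{\nabla f(x)}{f(x)}\Bigr|^n dx \geq 2^{-n}\int_{\Omega\setminus f^{-1}(0)}\Bigl|\frac{\nabla g(x)}{g(x)}\Bigr|^n dx = \infty,
\]
which is the desired conclusion. There is no real obstacle; the only thing that requires a moment of care is verifying that $g$ satisfies the hypotheses of Theorem~1.2 (locally Lipschitz, nonempty zero set, non-constant), which uses only the assumption that $f$ itself is non-constant with a common zero. The Cauchy--Schwarz step is the lone inequality doing work, and the factor $2^n$ is harmless since the right-hand side is already infinite.
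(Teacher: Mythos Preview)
Your argument is correct. The paper does not actually supply a proof of this mapping version; it merely states the theorem after remarking that ``all results proved so far can be generalized to Lipschitz mappings.'' Your reduction via $g=|f|^2$ is exactly the natural extension of the squaring trick the paper already uses in the scalar case (Lemma~2.2 and the opening paragraph of the proof of Theorem~2.4), so your approach is fully in line with the paper's methods. The verification that $g$ is non-constant (because $g$ vanishes somewhere and $f$ is non-constant forces $g\not\equiv 0$) and the Cauchy--Schwarz bound $|\nabla g|\le 2|f|\,|\nabla f|$ are both handled correctly.
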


\end{document}